\let\origsection=\section \def\section{\@ifstar{\origsection*}{\mysection}} 
\def\mysection{\@startsection{section}{1}\z@{.7\linespacing\@plus\linespacing}{.5\linespacing}{\normalfont\scshape\centering\S}}
\renewcommand{\PrintDOI}[1]{\doi{#1}}
\numberwithin{equation}{section}
\theoremstyle{plain}
\newtheorem{thm}{Theorem}[section]
\newtheorem{lemma}[thm]{Lemma}
\theoremstyle{definition}
\newtheorem{dfn}[thm]{Definition}
\def\FF{\mathbbm F}
\def\st{\,|\,}                                 \def\card#1{|#1|}                              
\begin{document}
\title[Parity search]{The parity search problem}
\author{Christian Reiher}
\address{Fachbereich Mathematik, Universit\"at Hamburg,
  D-20146 Hamburg, Germany}
\email{Christian.Reiher@uni-hamburg.de}

\keywords{Combinatorial search theory, parity search, finite fields}
\subjclass[2010]{05D05, 11T99, 90B40}

\begin{abstract}
We prove that for any positive integers $n$ and $d$ there exists a collection consisting 
of $f=d\log n+O(1)$ subsets $A_1, A_2, \ldots, A_f$ of $[n]$ such that for any two distinct 
subsets $X$ and $Y$ of $[n]$ whose size is at most $d$ there is an index $i\in [f]$ for which 
$\card{A_i\cap X}$ and $\card{A_i\cap Y}$ have different parity. 
Here we think of $d$ as fixed whereas $n$ is thought of as tending to infinity, 
and the base of the logarithm is $2$.

Translated into the language of combinatorial search theory, 
this tells us that 
\[
	d \log n+O(1)
\]
queries suffice to 
identify up to $d$ marked items from a totality of $n$ items 
if the answers one gets are just whether an even or an odd 
number of marked elements has been queried, 
even if the search is performed non-adaptively. 
Since the entropy method easily yields a matching lower bound 
for the adaptive version of this problem, our result is 
asymptotically best possible. 

This answers a question posed by {\sc D\'{a}niel Gerbner} and {\sc Bal\'{a}zs Patk\'{o}s} 
in {\sc Gyula O.H. Katona's} Search Theory Seminar at the R\'{e}nyi institute.
\end{abstract}

\maketitle

\section{Introduction} 

In a typical problem from combinatorial search theory a finite number of 
entities is given to you some of which are considered to be {\it marked} 
or {\it defective} and your task is to find out which of them these are. 
For example, many recreational problems involving coins a few of which 
are forged as well as a scale that may be used to expose them belong to 
this area.  

For a thorough introduction to combinatorial search theory, the reader is 
referred to the excellent and comprehensive survey article \cite{Kat}.

Recently {\sc Gerbner} and {\sc Patk\'{o}s}~\cite{Pat} started to consider 
the following search problem: 
One gets confronted with $n$ items -- the set of which may for convenience 
be identified with the set $[n]=\{1, 2, \ldots, n\}$ -- and one knows in 
advance that at most $d$ of these items are marked, where $0\le d\le n$. 
To identify them, one may make a sequence of queries, i.e., specify a sequence 
of subsets of $[n]$, and each time one makes such a query one is told the
parity of the number of marked elements in ones query set. 
So, for instance, by querying a set containing only one element one learns 
whether this element is marked or not and hence the problem is solvable in 
principle. As usual, however, it is more interesting to think about the 
least number of queries one needs to perform this task. 
More precisely, {\sc Gerbner} and {\sc Patk\'{o}s} asked what the 
asymptotic behaviour of this minimal number is if $d$ is fixed whilst $n$ 
tends to infinity. 

Now actually there are two version of this question. 
In the first of these, called the {\it adaptive problem}, 
one allows ones query sets to depend on the answers one has gotten 
to all previous queries. 
In the second, {\it non-adaptive} version, one has to specify all query 
sets before getting the first answer. The main result of this article asserts 
that for both of these versions $d\log n+O(1)$ queries are necessary and sufficient.    

It is quite standard to obtain a lower bound of the form $d\log n+O(1)$ 
to the adaptive problem, where the base of the logarithm is $2$. 
For if 
\[
	2^m<\sum\limits_{i=0}^{d}\binom{n}{i}=\Theta(n^d)\,,
\]
then it may happen 
that each of the first $m$ answers reduces the number of outcomes still possible 
by no more than a factor of two, for which reason $m$ queries cannot be enough. 
It is also clear that the non-adaptive problem requires no less queries than the 
adaptive one for one may pretend to search adaptively while in fact not caring 
about the answers. 
Thus it suffices to prove an upper bound of the form $d\log n+O(1)$ to the 
non-adaptive problem.     

It seems worth while to observe that the non-adaptive problem may also be 
viewed as a question from extremal set theory. Specifically, one is interested 
in the number $f(n, d)$ defined as follows:

\begin{dfn}
Given two positive integers $n$ and $d$, let $f(n, d)$ be the least integer $f$ 
such that there exist $f$ subsets $A_1, A_2, \ldots, A_f$ of $[n]$ with the 
following property: For any two distinct subsets $X$ and $Y$ of $[n]$ the size 
of which is at most $d$, there exists an index $i\in [f]$ such that the 
cardinalities of $A_i\cap X$ and $A_i\cap Y$ have different parity.
\end{dfn}

I would like to record here that {\sc Gerbner} and {\sc Patk\'{o}s} showed 
that choosing these sets~$A_i$ uniformly at random one can get 
$f(n, d)\le 2d\log n+O(1)$. Their proof uses the first moment method. 
It may be observed that a routine application of the symmetric version 
of {\sc Lov\'{a}sz's} Local Lemma (see \cite{ErdLov} or Corollary 5.1.2 
from \cite{AlonSpen}) would allow us to improve this to $f(n, d)\le (2d-1)\log n+O(1)$. 
But in fact we shall prove $f(n, d)=d\log n+O(1)$ below. Somewhat more explicitly, 
we shall get

\begin{thm} \label{thm:main}
If $d$, $m$, and $n$ denote three positive integers with ${dm\le n<2^m}$, 
then $f(n, d)\le dm$.
\end{thm}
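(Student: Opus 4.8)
The plan is to translate the statement into linear algebra over $\FF_2$ and then to write down a matrix of BCH type. For $S\subseteq[n]$ let $\chi_S\in\FF_2^n$ denote its indicator vector; then the parity of $\card{A_i\cap X}$ is the scalar product $\langle\chi_{A_i},\chi_X\rangle$ in $\FF_2$. Hence a family $A_1,\dots,A_f$ has the desired property if and only if the $\FF_2$-linear map $\Phi\colon\FF_2^n\to\FF_2^{f}$, $\Phi(v)=\bigl(\langle\chi_{A_i},v\rangle\bigr)_{i\in[f]}$, satisfies $\Phi(\chi_X)\ne\Phi(\chi_Y)$ for all distinct $X,Y$ of size $\le d$; by linearity and $\chi_X+\chi_Y=\chi_{X\triangle Y}$ over $\FF_2$ this says $\Phi(\chi_Z)\ne 0$ for every nonempty $Z$ arising as such a symmetric difference. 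Since every set of size $\le 2d$ is the disjoint union of two sets of size $\le d$, those $Z$ are exactly the nonempty subsets of $[n]$ of size $\le 2d$. So the task reduces to producing an $f\times n$ matrix $H$ over $\FF_2$ with $f=dm$ whose kernel contains no nonzero vector of Hamming weight $\le 2d$ (equivalently: any $2d$ columns of $H$ are $\FF_2$-linearly independent); the sets $A_i$ are then read off as the supports of the rows of $H$.

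To build $H$, I would fix the field $\FF_{2^m}$ together with an $\FF_2$-basis, which identifies $\FF_{2^m}$ with $\FF_2^{m}$ $\FF_2$-linearly. Using $n<2^m$, choose distinct nonzero elements $\beta_1,\dots,\beta_n\in\FF_{2^m}$. Let $H$ consist of $d$ horizontal blocks of $m$ rows each, where the $j$-th column of the $i$-th block ($1\le i\le d$) is the coordinate vector of $\beta_j^{\,2i-1}$; this is a $dm\times n$ matrix over $\FF_2$. (Only $n<2^m$ is used here; the hypothesis $dm\le n$ merely delimits the range where the resulting bound $f(n,d)\le dm$ beats the trivial bound $f(n,d)\le n$ furnished by singletons.)

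For the verification, suppose towards a contradiction that some nonzero $v\in\FF_2^n$ of weight $w$ with $1\le w\le 2d$ lies in the kernel of $H$; write its support as $\{j_1,\dots,j_w\}$ and set $\gamma_k=\beta_{j_k}$, so the $\gamma_k$ are distinct and nonzero. Reading off the $i$-th block of $Hv=0$ (and using $\FF_2$-linearity of the coordinate map) gives $\sum_{k=1}^{w}\gamma_k^{\,2i-1}=0$ in $\FF_{2^m}$ for $i=1,\dots,d$. Applying the Frobenius map $x\mapsto x^{2}$ repeatedly — legitimate in characteristic $2$ since $(\sum x_k)^2=\sum x_k^{2}$ — yields $\sum_k\gamma_k^{\,(2i-1)2^{s}}=0$ for all $s\ge 0$; and since every integer $e$ with $1\le e\le w$ has the form $(2i-1)2^{s}$ with $1\le i\le d$ (take $2^{s}$ to be the largest power of $2$ dividing $e$, whose odd cofactor is then $\le w\le 2d$, hence $\le 2d-1$), we obtain $\sum_k\gamma_k^{\,e}=0$ for $e=1,\dots,w$. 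Writing $\mathbf u$ for the vector with entries $\gamma_1,\dots,\gamma_w$, this reads $V\mathbf u=0$ with $V=(\gamma_k^{\,e-1})_{1\le e\le w,\;1\le k\le w}$ the Vandermonde matrix of the distinct points $\gamma_1,\dots,\gamma_w$, whence $\mathbf u=0$, contradicting $\gamma_1\ne 0$. Thus $H$ has the required property and $f(n,d)\le dm$. I expect the only genuinely delicate point to be the exponent bookkeeping in this last step: that using merely the odd powers $1,3,\dots,2d-1$ (which is exactly what keeps the row count at $dm$ rather than $2dm$) already forces independence, the squaring step automatically supplying the even exponents $2,4,\dots,2d$ — this is the mechanism underlying the BCH bound, and everything else is routine linear algebra over $\FF_{2^m}$.
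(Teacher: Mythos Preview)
Your proof is correct and is essentially the same as the paper's: both construct the columns $(\beta_j,\beta_j^{3},\dots,\beta_j^{2d-1})\in\FF_{2^m}^{d}\cong\FF_2^{dm}$ and argue that no nontrivial $\FF_2$-combination of at most $2d$ of them vanishes, with Frobenius supplying the even exponents. The paper packages this as three lemmata (a power-sum lemma proved via the polynomial $x\prod_{b\ne a}(x-b)$, a unique-representation statement, and a subspace/change-of-basis step to pass to an orthogonal complement), whereas you go directly to the parity-check matrix and invoke Vandermonde; the mathematical content is the same, and your observation that the hypothesis $dm\le n$ is not actually needed for the construction is accurate.
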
 

The proof will be given in the next section.
  
\section{The proof of Theorem~\ref{thm:main}}

The actual proof of Theorem~\ref{thm:main} is prepared by a sequence of three 
lemmata most of which are of an algebraic nature. 
Throughout we denote the finite field with $q$ elements by 
$\FF_q$ and refer to the multiplicative group of its nonzero elements by 
$\FF_q^\times$. If $F$ is a field we write $F^n$ for the $n$-dimensional 
vector space over $F$. Finally we would like to remind the reader that the number 
of ones appearing in a vector from $\FF_2^{n}$ is sometimes called its {\it weight}.

The basic strategy of our proof is as follows: one interprets the problem as a statement 
about vector spaces over $\FF_2$ and applies a change of basis to see that all 
one needs to do is proving Lemma~\ref{lem:23}. 
Using a direct sum decomposition this task can be reduced to showing Lemma~\ref{lem:22}, 
which in turn is accomplished by means of an explicit construction based on the 
following algebraic fact exploiting the multiplicative structure of fields having 
characteristic~$2$.
     
\begin{lemma}\label{lem:21}
If $A\not\subseteq\{0\}$ denotes a finite subset of a field of characteristic $2$, 
then for some odd positive integer $k\le\card{A}$ one has 
$\sum\limits_{x\in A}x^k\ne 0$.   
\end{lemma}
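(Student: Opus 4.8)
The plan is to argue by contradiction using power sums and Newton's identities. Suppose $A=\{a_1,\dots,a_n\}$ is a finite subset of a field $F$ of characteristic $2$ with $A\not\subseteq\{0\}$, and suppose that $p_k:=\sum_{x\in A}x^k=0$ for every odd $k$ with $1\le k\le n$. First I would reduce to the case $0\notin A$: if $0\in A$, deleting it changes none of the power sums $p_k$ for $k\ge 1$, and it only decreases $\card A$, so it suffices to prove the statement when all elements of $A$ are nonzero; note $A\ne\varnothing$ since $A\not\subseteq\{0\}$. So assume $a_1,\dots,a_n\in F^\times$ are distinct and $p_k=0$ for all odd $k\le n$.

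The key point is that in characteristic $2$ the even power sums are determined by the odd ones: since squaring is a field homomorphism, $p_{2j}=\bigl(\sum_x x^j\bigr)^2=p_j^{\,2}$. Hence if $p_k=0$ for all odd $k\le n$, an easy induction on $k$ (writing each even $k\le n$ as $2j$ with $j\le n/2<n$) gives $p_k=0$ for \emph{all} $k$ with $1\le k\le n$. Now I invoke Newton's identities relating the power sums $p_1,\dots,p_n$ to the elementary symmetric polynomials $e_1,\dots,e_n$ of $a_1,\dots,a_n$: over any commutative ring one has, for $1\le k\le n$,
\[
	p_k - e_1 p_{k-1} + e_2 p_{k-2} - \cdots + (-1)^{k-1} e_{k-1} p_1 + (-1)^k k\, e_k = 0.
\]
Feeding in $p_1=\cdots=p_n=0$, these identities collapse to $k\,e_k=0$ for $k=1,\dots,n$. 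Over a field of characteristic $2$ this only forces $e_k=0$ for \emph{odd} $k$, so I cannot conclude $e_k=0$ for all $k$ directly — this is the main obstacle, and it is where the parity of $\card A$ must enter. To get around it I would instead consider the polynomial $P(T)=\prod_{i=1}^n (T-a_i)=\prod_{i=1}^n(T+a_i)\in F[T]$, whose roots are exactly the distinct nonzero elements of $A$, and combine the vanishing of all $p_k$ ($1\le k\le n$) with the logarithmic-derivative identity
\[
	\frac{P'(T)}{P(T)} = \sum_{i=1}^n \frac{1}{T+a_i} = \sum_{i=1}^n \sum_{k\ge 0} \frac{a_i^{\,k}}{T^{k+1}} = \sum_{k\ge 0}\frac{p_k}{T^{k+1}}
\]
as a formal Laurent series in $1/T$. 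Since $p_k=0$ for $1\le k\le n$ and $p_0=n$, the series on the right is $n\,T^{-1}+O(T^{-n-2})$; comparing with $P'/P$, where $P$ has degree $n$ and $P'$ has degree at most $n-1$, one finds $T P'(T) = n\,P(T) + (\text{terms of degree} \le -1)$, forcing $T P'(T)=n\,P(T)$ exactly as polynomials. Writing $P(T)=\sum_{j} c_j T^{j}$ this says $j c_j = n c_j$ for every $j$, i.e. $(j-n)c_j=0$ for all $j$. Since $c_n=1\ne 0$, every coefficient $c_j$ with $j\ne n$ and $j\not\equiv n \pmod 2$ vanishes automatically, but for $j\equiv n\pmod 2$, $j\ne n$, we only get $(j-n)c_j=0$ with $j-n$ even, hence zero in characteristic $2$, giving no information — so this still has the same defect.

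The resolution, which I expect to be the crux, is that the defect is only apparent: $P$ has $n$ \emph{distinct} roots in $F$, so $P$ is separable, hence $\gcd(P,P')=1$, hence $P'\ne 0$. But $TP'(T)=nP(T)$ together with $\deg P = n$ forces, upon comparing leading coefficients, $n\cdot 1 = n\cdot 1$ (no contradiction) while the relation $P'(T)=\frac{n}{T}P(T)$ shows $T\mid P(T)\cdot n$; if $n$ is odd (nonzero in $F$) this gives $T\mid P(T)$, i.e. $0\in A$, contradicting $0\notin A$. If $n$ is even, $n=0$ in $F$ and the relation reads $TP'(T)=0$, so $P'(T)=0$, contradicting separability of $P$ (as $P$ is nonconstant, $n\ge 1$, and here $n\ge 2$). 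Either way we reach a contradiction, so some odd $k\le n=\card A$ must have $p_k\ne 0$, completing the proof. I would double-check the edge case $\card A=1$ separately: there $A=\{a\}$ with $a\ne 0$, and $p_1=a\ne 0$, as needed.
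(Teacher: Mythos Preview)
Your argument is correct, though it takes a genuinely different route from the paper. Both proofs share the Frobenius step $p_{2j}=p_j^{\,2}$, but they use it in opposite directions. The paper first shows directly that some $p_k$ with $k\le\card{A}$ is nonzero by the one-line identity
\[
	\sum_{x\in A} x\prod_{b\in A\setminus\{a\}}(x-b)=a\prod_{b\in A\setminus\{a\}}(a-b)\ne 0
\]
(for any fixed $a\in A\setminus\{0\}$, every summand with $x\ne a$ vanishes), and then observes that the \emph{minimal} such $k$ is odd by Frobenius. You instead assume all odd $p_k$ with $k\le n$ vanish, use Frobenius to kill all $p_k$ with $1\le k\le n$, and then run the generating-function identity $P'/P=\sum_{k\ge 0}p_k T^{-k-1}$ in $F((1/T))$ to force $TP'=nP$, finishing via the constant term ($n$ odd) or separability ($n$ even). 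The paper's proof is shorter and entirely elementary, avoiding formal Laurent series; your approach is a bit heavier but has the merit of making transparent \emph{why} the first $n$ power sums cannot all vanish for $n$ distinct nonzero elements, and it cleanly isolates where the distinctness of the elements of $A$ is used (namely, in the separability step). Your detours through Newton's identities and the apparent ``defect'' are unnecessary in a final write-up, but the eventual argument is sound.
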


\begin{proof}
Pick any $a\in A-\{0\}$ and observe that
\[
	\sum_{x\in A}x\prod_{b\in A-\{a\}}(x-b)=a\prod_{b\in A-\{a\}}(a-b)\ne 0\,.
\]
Expanding the product appearing under the sum of the left hand side and rearranging we get 
\[
	\sum_{i=1}^{\card{A}}\alpha_i\sum_{x\in A}x^i\ne 0
\]
with certain irrelevant coefficients $\alpha_1, \ldots, \alpha_{\card{A}}$ 
from our base field.  
Thus there exists some positive integer $k'\le\card{A}$ such that 
$\sum\limits_{x\in A}x^{k'}\ne 0$. Now if $k$ denotes the least such $k'$, 
then $k$ automatically has to be odd, for otherwise we could use the equation 
\[
	\sum_{x\in A}x^{k}=\Bigl(\sum_{x\in A}x^{k/2}\Bigr)^2
\]
to obtain a contradiction. Thereby our lemma is proved. 
\end{proof}

\begin{lemma}\label{lem:22}
For any two positive integers $d$ and $m$, the $\FF_2$-vector space 
$\FF_2^{dm}$ has a generating subset $B$ of size at least $2^m-1$ 
such that each vector admits at most one representation as the sum of at most 
$d$ distinct members of $B$.
\end{lemma}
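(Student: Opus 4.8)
The plan is to realize the candidate generating set $B$ inside the field $\FF_{2^m}$, viewed as an $m$-dimensional vector space over $\FF_2$, and then to take a direct sum of $d$ copies of this picture. Concretely, let $q=2^m$ and consider the $\FF_2$-vector space $V=\FF_q$. I would like to equip each "slot" with a distinguishing exponent: by Lemma~\ref{lem:21}, applied to the whole set $A=\FF_q$ (or to $\FF_q^\times$, which is equally fine since $0^k=0$), there is an odd $k$ with $\sum_{x\in\FF_q}x^k\ne 0$; more usefully, for the construction I want, for each $j\in\{1,\ldots,d\}$ I want a power map $x\mapsto x^{e_j}$ on $\FF_q$ that is additive (i.e. $e_j$ a power of $2$, so that it is $\FF_2$-linear) and such that the corresponding "moment" functionals separate the slots. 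So set $e_j=2^{c_j}$ for suitable exponents $c_j$, and define $B$ to be a set of vectors in $\FF_q^{\,d}=\FF_2^{dm}$ of the shape $v_x=(x^{e_1},x^{e_2},\ldots,x^{e_d})$ as $x$ ranges over $\FF_q^\times$ (possibly together with a few extra basis vectors to guarantee that $B$ actually spans; that is a minor bookkeeping point since $|\FF_q^\times|=2^m-1$ already meets the size requirement and one can enlarge $B$ harmlessly if needed, as only the "at most one representation" property is delicate).

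The heart of the matter is the uniqueness of representations: if two subsets $S,T\subseteq B$ with $|S|,|T|\le d$ have the same sum in $\FF_2^{dm}$, I must show $S=T$. Passing to the symmetric difference and using characteristic $2$, it is enough to show that no nonempty set of at most $2d$ of the generators sums to zero — equivalently, that any $\le 2d$ distinct elements $x_1,\ldots,x_r\in\FF_q^\times$ with $\sum_{i=1}^r v_{x_i}=0$ must have $r=0$. Reading this coordinate-by-coordinate, the condition is $\sum_{i=1}^r x_i^{\,e_j}=0$ for every $j\in\{1,\ldots,d\}$, i.e. the power sums $p_{e_1},\ldots,p_{e_d}$ of the multiset $\{x_1,\ldots,x_r\}$ all vanish. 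Here is where Lemma~\ref{lem:21} does the real work: it tells us that for a nonempty finite subset $A$ of a characteristic-$2$ field, \emph{some} odd power sum of size at most $|A|$ is nonzero. The job is therefore to choose the exponents $e_1<\cdots<e_d$ (powers of $2$) so that their collection of power-sum functionals is rich enough to detect every nonzero configuration of size $\le 2d$. Since power sums with exponents $2^a\cdot(\text{odd})$ are squares (hence determined over $\FF_2$ by the odd part), and since Lemma~\ref{lem:21} only guarantees a nonzero \emph{odd} power sum up to the cardinality, the natural choice is to let the $e_j$ run through $1,2,4,\ldots,2^{d-1}$ so that, via the Frobenius identity $p_{2t}=p_t^{\,2}$, controlling $p_1,\ldots,p_d$ is equivalent to controlling $p_k$ for all $k$ up to $d$ whose value is not forced by a smaller one; then a set of size $\le 2d$ whose power sums $p_1,\ldots,p_d$ all vanish would contradict Lemma~\ref{lem:21} once one checks that an odd $k\le 2d$ can be reduced into this controlled range — and this last reduction is exactly the parity trick already used in the proof of Lemma~\ref{lem:21}.

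I expect the main obstacle to be precisely this calibration between "at most $2d$ elements" on the combinatorial side and "odd exponent at most $|A|$" on the algebraic side: one has to pick the $d$ additive exponents $e_1,\ldots,e_d$ so that \emph{every} odd integer up to $2d$ either appears among the $e_j$ or is a $2$-power multiple of some $e_j$, which pins the $e_j$ down to be $1,3,5,\ldots,(2d-1)$'s "$2$-power closure"—but those are not powers of $2$, so additivity fails, and one must instead argue more cleverly, e.g. by extracting from $\sum x_i^{e_j}=0$ (for the $d$ chosen additive exponents) enough relations to conclude $\sum_{x\in\{x_1,\ldots,x_r\}}x^k=0$ for \emph{all} $k\le r\le 2d$, via $\FF_2$-linear combinations and Frobenius. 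Once that implication is in hand, Lemma~\ref{lem:21} immediately forces $r=0$, completing the uniqueness argument; spanning is then the easy part. The clean way to package all of this is to phrase the coordinates not as raw power maps but as the $\FF_2$-linear maps $\FF_q\to\FF_2$ given by $x\mapsto \mathrm{Tr}(x^{e_j})$ — but I suspect the author avoids traces and instead works directly with the vector $(x,x^2,x^4,\ldots,x^{2^{d-1}})\in\FF_q^d$, whose coordinates \emph{are} $\FF_2$-linear in $x$, and then derives from the vanishing of these $d$ symmetric-function-like quantities, via Newton's identities over $\FF_2$, that the first $2d$ elementary symmetric functions of the $x_i$ vanish, hence $r<2d$ forces $r=0$. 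That route makes Newton's identities the workhorse and relegates Lemma~\ref{lem:21} to the single odd-exponent input it supplies.
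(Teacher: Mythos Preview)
Your proposal contains a genuine gap, and its root is the insistence that the coordinate maps $x\mapsto x^{e_j}$ be $\FF_2$-linear. That requirement is neither needed nor helpful. The identification $\FF_{2^m}^{\,d}\cong\FF_2^{dm}$ is purely an identification of $\FF_2$-vector spaces; nothing in the argument asks the assignment $\xi\mapsto v_\xi$ to be linear. Once you drop that constraint, the ``calibration'' you worry about evaporates.

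Concretely, your final candidate $v_x=(x,x^2,x^4,\ldots,x^{2^{d-1}})$ fails outright. Since $p_{2^j}=p_1^{2^j}$ in characteristic~$2$, the $d$ conditions $\sum_i x_i^{2^j}=0$ for $j=0,\ldots,d-1$ collapse to the single equation $\sum_i x_i=0$. For $m\ge 2$ there are plenty of three-element subsets of $\FF_{2^m}^\times$ with zero sum, so the unique-representation property is violated already for $d\ge 2$. No amount of Newton's identities can manufacture $2d$ independent constraints out of what is, informationally, a single one; the suggested deduction that ``the first $2d$ elementary symmetric functions vanish'' is simply false for this choice of exponents.

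The paper does exactly what you briefly contemplate and then discard: it takes the \emph{odd} exponents $e_j=2j-1$, setting $v_\xi=(\xi,\xi^3,\ldots,\xi^{2d-1})$ for $\xi\in\FF_{2^m}^\times$. If a nonempty $A\subseteq\FF_{2^m}^\times$ with $|A|\le 2d$ satisfies $\sum_{\xi\in A}v_\xi=0$, then $\sum_{\xi\in A}\xi^k=0$ for every odd $k\in\{1,3,\ldots,2d-1\}$, and in particular for every odd $k\le|A|$; this contradicts Lemma~\ref{lem:21} immediately, with no Newton's identities and no further reduction needed. Spanning is handled, as you correctly anticipated, by adjoining a basis of a complementary subspace. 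Your rejection of the odd exponents on the grounds that ``additivity fails'' was the wrong turn: additivity was never required.
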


\begin{proof}
Plainly it suffices to exhibit a set consisting of $2^m-1$ vectors from 
$\FF_2^{dm}$ possessing the unique representability property. 
For once we have found such a set $B$, we may look at a direct sum decomposition 
$\FF_2^{dm}=\langle B\rangle\oplus U$ with some vector space $U$ 
and extend $B$ by a basis of $U$ to achieve both goals. 

For the purpose of finding such a set $B$, we may evidently replace the 
vector space $\FF_2^{dm}$ appearing in this statement by $\FF_{2^m}^d$. 
Corresponding to each number $\xi\in\FF_{2^m}$ we define $v_\xi$ 
to be the vector $(\xi, \xi^3, \ldots, \xi^{2d-1})$ from the latter space, 
and then we claim that
\[
	B=\{v_\xi\st \xi\in\FF_{2^m}^{\times} \}
\]
is as desired. To see this, suppose that some vector $x$ admitted 
two distinct representations as the sum of at most $d$ elements 
from $B$. Adding these representations up and canceling terms appearing twice, 
we obtain a nonempty subset $A$ of $\FF_{2^m}^{\times}$ 
whose size is at most $2d$ such that $\sum\limits_{\xi\in A}v_\xi=0$. 
So in particular for all odd $k\le\card{A}$ we have $\sum\limits_{\xi\in A}\xi^k=0$, 
contrary to our previous lemma.
\end{proof}

\begin{lemma}\label{lem:23}
Let $d$, $m$, and $n$ denote three positive integers such that $dm\le n<2^m$. 
Then there is some vector subspace of $\FF_2^{n}$ of codimension $dm$ 
containing no nonzero vector whose weight is at most $2d$.
\end{lemma}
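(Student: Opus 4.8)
The plan is to realise the desired subspace as the kernel of a surjective linear map $\FF_2^{n}\to\FF_2^{dm}$ built from the generating set $B$ furnished by Lemma~\ref{lem:22}.

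First observe that since $n<2^m$ and $n$ is an integer we have $n\le 2^m-1\le\card B$, while on the other hand $n\ge dm$. As $B$ generates $\FF_2^{dm}$, it contains a basis $b_1,\dots,b_{dm}$ of that space; since $n\le\card B$ we may enlarge this to a list $b_1,\dots,b_n$ of pairwise distinct members of $B$. Let $e_1,\dots,e_n$ be the standard basis of $\FF_2^{n}$ and define the linear map $\phi\colon\FF_2^{n}\to\FF_2^{dm}$ by $\phi(e_j)=b_j$. Because $b_1,\dots,b_{dm}$ already span the target, $\phi$ is surjective, so $W=\ker\phi$ has codimension exactly $dm$ in $\FF_2^{n}$. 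It thus remains to check that $W$ contains no nonzero vector of weight at most $2d$.

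Suppose for contradiction that $v\in W$ is nonzero with weight at most $2d$, say $v=\sum_{j\in S}e_j$ for some nonempty $S\subseteq[n]$ with $\card S\le 2d$. Split $S$ into two disjoint parts $S=S_1\cup S_2$ with $\card{S_1},\card{S_2}\le d$, which is possible as $\card S\le 2d$. From $\phi(v)=0$ and the characteristic being $2$ we get
\[
	\sum_{j\in S_1}b_j=\sum_{j\in S_2}b_j\,,
\]
so the common value on both sides possesses two representations as a sum of at most $d$ distinct elements of $B$ (distinctness being inherited from the pairwise distinctness of $b_1,\dots,b_n$). By the defining property of $B$ in Lemma~\ref{lem:22} these representations must coincide, that is $\{b_j\st j\in S_1\}=\{b_j\st j\in S_2\}$; since $j\mapsto b_j$ is injective this forces $S_1=S_2$, and combined with $S_1\cap S_2=\emptyset$ it yields $S=\emptyset$, contrary to our assumption.

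The whole argument is short once Lemma~\ref{lem:22} is in hand; the only step requiring a little care is the translation — recognising that a kernel vector of weight at most $2d$ produces, after halving its support, a genuine collision between two $(\le d)$-fold sums of members of $B$ — together with the bookkeeping at the outset ensuring that the selected vectors $b_1,\dots,b_n$ span $\FF_2^{dm}$, so that the codimension comes out to be precisely $dm$ rather than something smaller.
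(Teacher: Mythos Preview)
Your proof is correct and follows essentially the same approach as the paper: both select $n$ distinct vectors $b_1,\dots,b_n$ from the set $B$ of Lemma~\ref{lem:22} with the first $dm$ forming a basis, and both take the desired subspace to be the kernel of the map $e_j\mapsto b_j$ (the paper phrases this via an explicit direct sum decomposition and an automorphism sending a custom basis to the standard one, but unwinding that automorphism yields exactly your $\phi$). Your treatment of the key step---splitting the support $S$ into two halves of size at most $d$ to force a collision with the unique-representation property---is more explicit than the paper's, which simply asserts the conclusion.
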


\begin{proof}
It is convenient to think of $\FF_2^{n}$ as being the space 
$V=\FF_2^{dm}\oplus\FF_2^{n-dm}$. 
It has $W=\{0\}\oplus\FF_2^{n-dm}$ as a subspace of codimension $dm$. 
By our foregoing lemma there exist $n$ distinct vectors $b_1, b_2, \ldots, b_n$ 
from $\FF_2^{dm}$ such that each vector from this space is expressible
in at most one way as the sum of at most $d$ distinct vectors from this sequence, 
and such that $b_1, b_2, \ldots, b_{dm}$ is a basis of this space. 
Now define $v_1, v_2, \ldots, v_{dm}$ to be the zero vector of $\FF_2^{n-dm}$ 
and let $v_{dm+1}, v_{dm+2}, \ldots, v_{n}$ be any basis of this space. 
Clearly the set 
\[
	L=\{(b_i, v_i)\st i=1, 2, \ldots, n\}
\]
forms a basis of $V$ and by our construction it is not possible that a 
nonempty sum comprised of at most $2d$ distinct terms from $L$ gives a 
vector from $W$. 
Any automorphism of $V$ sending $L$ to the standard basis 
maps $W$ onto a vector subspace $W'$ whose 
codimension is still $dm$ and that does not contain any nonzero vector whose 
weight is at most $2d$. So $W'$ is as desired. 
\end{proof} 

We are now ready to prove Theorem~\ref{thm:main}. To do so we identify the power set 
of $[n]$ with the vector space $\FF_2^{n}$ via characteristic functions. 
It is well known that the parity of the size of the intersection of two sets 
thus corresponds to the standard scalar product. 
Our task now consists in exhibiting $dm$ vectors $v_1, v_2, \ldots, v_{dm}$ 
such that for any two distinct vectors~$x$ and $y$ the weight of which is at 
most $d$ there is some $i\in[dm]$ with $x\cdot v_i\ne y\cdot v_i$. 
This may be achieved by taking $W$ to be a vector subspace of $\FF_2^{n}$ 
as obtained in our third lemma and then choosing the vectors $v_1, v_2, \ldots, v_{dm}$ 
so as to span its orthogonal complement. 
Given any two distinct vectors $x$ and $y$ from $\FF_2^{n}$ whose weights 
are at most $d$, one easily sees that their difference is nonzero and has weight 
at most $2d$. Therefore it cannot belong to $W$, which in turn means that there 
is indeed some $i\in[dm]$ satisfying $(x-y)\cdot v_i\ne 0$. 
This completes the proof of our main result, Theorem~\ref{thm:main}.     

\subsection*{Acknowledgement} I would heartily like to thank {\sc Gyula O.H. Katona} 
for inviting me to the Alfred R\'{e}nyi Institute for one week, where this piece of research 
was carried out, and {\sc D\'{a}niel Gerbner} and {\sc Bal\'{a}zs Patk\'{o}s} 
for telling me about the problem solved here. 

\begin{bibdiv}
\begin{biblist}

\bib{AlonSpen}{book}{
   author={Alon, Noga},
   author={Spencer, Joel H.},
   title={The probabilistic method},
   series={Wiley-Interscience Series in Discrete Mathematics and
   Optimization},
   edition={3},
   note={With an appendix on the life and work of Paul Erd\H os},
   publisher={John Wiley \& Sons, Inc., Hoboken, NJ},
   date={2008},
   pages={xviii+352},
   isbn={978-0-470-17020-5},
   review={\MR{2437651}},
   doi={10.1002/9780470277331},
}

\bib{ErdLov}{article}{
   author={Erd{\H{o}}s, P.},
   author={Lov{\'a}sz, L.},
   title={Problems and results on $3$-chromatic hypergraphs and some related
   questions},
   conference={
      title={Infinite and finite sets (Colloq., Keszthely, 1973; dedicated
      to P. Erd\H os on his 60th birthday), Vol. II},
   },
   book={
      publisher={North-Holland, Amsterdam},
   },
   date={1975},
   pages={609--627. Colloq. Math. Soc. J\'anos Bolyai, Vol. 10},
   review={\MR{0382050}},
}

\bib{Kat}{article}{
   author={Katona, G. O. H.},
   title={Combinatorial search problems},
   conference={
      title={Survey of combinatorial theory},
      address={Proc. Internat. Sympos., Colorado State Univ., Fort Collins,
      Colo.},
      date={1970},
   },
   book={
      publisher={North-Holland, Amsterdam},
   },
   date={1973},
   pages={285--308},
   review={\MR{0368735}},
}

\bib{Pat}{misc}{
      author={Patk\'{o}s, Bal\'{a}zs},
       title={Talk given in Katona's research seminar on combinatorial 
       				search theory at the R\'{e}nyi institute in February 2012},
}
	
\end{biblist}
\end{bibdiv}
  
\end{document}